\providecommand{\U}[1]{\protect\rule{.1in}{.1in}}
\newtheorem{theorem}{Theorem}
\newtheorem{definition}[theorem]{Definition}
\newtheorem{remark}[theorem]{Remark}
\newenvironment{proof}[1][Proof]{\noindent\textbf{#1.} }{\ \rule{0.5em}{0.5em}}
\begin{document}

\begin{center}
{\Large A common framework for some techniques in }

{\Large Applied mathematics}

{\LARGE \ } \\[0pt]\vspace{1cm} {\large by\vspace{1cm}}

{\large \textbf{Bilal Chanane}}

\vspace{1cm}

Department of Mathematics and Statistics

KFUPM, Dhahran 31261, Saudi Arabia\\[0pt]E-Mail: chanane@kfupm.edu.sa\\[0pt]%
\vspace{1cm}
\end{center}

\textbf{Abstract}

The objective in this paper is to demonstrate that four of the most used
techniques in applied mathematics, viz., Fourier series, Fourier transform,
Laplace transform and the Fourier-Laplace transform can be introduced using
eigenvalue problems for first order differential operators with
discrete/continuous spectra.

\textbf{Key Words }Discrete spectrum, continuous spectrum, Fourier series,
Fourier transform, Laplace transform, Fourier-Laplace transform, eigenvalue
problems, eigenvalue problems, eigenfunctions expansion.

\textbf{AMS subject classification} \ 42A16, 42B05, 42A38, 44A10, 34L10

\section{Introduction}

\setcounter{equation}{0}

A look at several applied/engineering mathematics textbooks (e.g.,
\cite{F69},\cite{TB94}, \cite{O1995}, \cite{S96}, \cite{ZC2000}, \cite{D2003}
) reveals that four of the most important techniques in applied mathematics,
viz., Fourier series, Fourier transform, Laplace transform and Fourier-Laplace
transform are introduced somewhat independently though some kind of limiting
argument is used to go from Fourier series to Fourier transform. More often
the Fourier series is introduced as arising from a second order
Sturm-Liouville problem. We note however, in some instances, its introduction
through first order eigenvalue problem is mentioned. It seems that no textbook
has approached all four problems using the same method. We shall present in
the following sections these notions as arising from from first order
eigenvalue problems with discrete/continuous spectra. This is by no means a
substitute for a more sophisticated presentation involving higher mathematics
to which students are not introduced until very late in their curricula, if at
all. Having said that, we think that the presentation using this approach
allows one to deal in a unified manner and at an elementary level these
fundamental tools and it should be adequate for engineering and applied
mathematics students.

\section{The Fourier series}

\setcounter{equation}{0}

Consider the problem of finding the values of the parameter $\lambda$ for
which the following problem
\begin{equation}
\left\{
\begin{array}
[c]{c}%
i\frac{dy}{dx}=\lambda y\mbox{ , }x\in(-L,L)\\
y(-L)=y(L)
\end{array}
\right.  \label{eq1}%
\end{equation}

\bigskip will have non trivial solutions, that is solutions that are not
identically zero. Such a parameter $\lambda$ is called an eigenvalue, while
the corresponding non zero solution is called an eigenfunction belonging to
the eigenvalue $\lambda$.

\bigskip

\begin{theorem}
The eigenvalue problem (\ref{eq1}) has an inifinite sequence of eigenvalues
$\lambda_{k}=k\pi/L$, $k\in\mathbb{Z}$, where $\mathbb{Z}$ is the set of
relative integers. The corresponding eigenfunctions $y_{k}=\exp(-ik\pi x/L)$,
$k\in\mathbb{Z,}$ are orthogonal with respect to the inner product
$<f,g>=\int_{-L}^{L}f(x)\overline{g(x)}dx$. The set of eigenfunctions is
complete and any function $f$ satisfying $\int_{-L}^{L}\left\vert
f(x)\right\vert ^{2}dx<\infty$, will have the expansion
\[
f(x)\sim\sum_{k=-\infty}^{\infty}c_{k}\mathbf{e}^{-i\frac{k\pi}{L}x}
\]
where%
\[
c_{k}=\frac{1}{2L}\int_{-L}^{L}f(x)\mathbf{e}^{i\frac{k\pi}{L}x}dx\text{,
}k\in\mathbb{Z}\text{.}
\]
The above series is called the complex Fourier series of $f$.
\end{theorem}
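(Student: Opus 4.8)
The plan is to verify the statement in four stages, following the structure of the claim itself: (i) find the eigenvalues and eigenfunctions, (ii) check orthogonality, (iii) derive the coefficient formula assuming the expansion converges, and (iv) address completeness. First I would solve the ODE $i\,dy/dx=\lambda y$ directly: its general solution is $y(x)=C\exp(-i\lambda x)$, and imposing the periodic boundary condition $y(-L)=y(L)$ forces $\exp(i\lambda L)=\exp(-i\lambda L)$, i.e. $\exp(2i\lambda L)=1$. This holds precisely when $2\lambda L=2k\pi$ for $k\in\mathbb{Z}$, giving $\lambda_{k}=k\pi/L$ and the eigenfunctions $y_{k}(x)=\exp(-ik\pi x/L)$.

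Next I would establish orthogonality by a direct computation of the inner product. For $j\neq k$ one has
\[
\langle y_{k},y_{j}\rangle=\int_{-L}^{L}\mathbf{e}^{-i\frac{k\pi}{L}x}\,\overline{\mathbf{e}^{-i\frac{j\pi}{L}x}}\,dx=\int_{-L}^{L}\mathbf{e}^{-i\frac{(k-j)\pi}{L}x}\,dx,
\]
which integrates to zero because $k-j$ is a nonzero integer and the exponential returns to its starting value over the full period; when $j=k$ the integrand is $1$ and the integral equals $2L$. This simultaneously yields the normalization constant $2L$ that appears in the coefficient formula.

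From here the coefficient formula follows formally: assuming $f(x)=\sum_{k} c_{k}\,\mathbf{e}^{-ik\pi x/L}$, I would take the inner product of both sides with $y_{m}=\mathbf{e}^{-im\pi x/L}$, interchange sum and integral, and invoke orthogonality so that only the $k=m$ term survives, producing $\langle f,y_{m}\rangle=2L\,c_{m}$ and hence $c_{m}=\frac{1}{2L}\int_{-L}^{L}f(x)\,\mathbf{e}^{im\pi x/L}\,dx$. This is the heart of the ``Fourier trick'' and is routine once orthogonality is in hand.

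The genuinely hard part is completeness, i.e. that the orthogonal system $\{y_k\}$ spans all of $L^2(-L,L)$ so that the series $\sim$ actually represents $f$ (in the $L^2$ sense). A self-contained proof of completeness cannot be obtained from the eigenvalue computation alone; it requires an external analytic input. My plan is to invoke one of the standard routes: either the Stone--Weierstrass theorem (trigonometric polynomials are dense in the continuous periodic functions, which are dense in $L^2$), or an explicit Fej\'er-kernel argument showing that the Ces\`aro means of the partial sums converge uniformly to continuous $f$. Given the elementary and pedagogical aim stated in the introduction, I expect the author to cite completeness as a known fact rather than prove it from scratch, and I would follow suit, noting only that once completeness is granted, Bessel's inequality becomes Parseval's identity and the $\sim$ may be upgraded to $L^2$ convergence.
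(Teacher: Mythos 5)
Your proposal is correct and follows essentially the same route as the paper: solve the first-order ODE to get $\lambda_k=k\pi/L$ and $y_k=\exp(-ik\pi x/L)$, verify orthogonality by direct integration of $\exp(-i(k-l)\pi x/L)$ over $[-L,L]$, and extract $c_k$ by the orthogonality (``Fourier trick'') argument. Your prediction about completeness is also accurate --- the paper does not prove it either, merely restating it as the assertion that vanishing of all coefficients forces $f=0$ almost everywhere --- and your explicit acknowledgment that this step needs an external input (Stone--Weierstrass or a Fej\'er-kernel argument) is if anything more candid than the paper's treatment.
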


\begin{proof}
From the differential equation we have $dy/dx=-i\lambda y$ whose general
solution is $y=c\exp(-i\lambda x)$. Using the boundary condition, we get
$c\exp(i\lambda L)=c\exp(-i\lambda L)$ leading to $2ic\sin\left(  \lambda
L\right)  =0$. To have non trivial solutions a necessary and sufficient
condition is $\sin\left(  \lambda L\right)  =0$. therefore, $\lambda_{k}%
=\frac{k\pi}{L},k\in\mathbb{Z}$ are the eigenvalues of the problem. The
corresponding eigenfunctions are $y_{k}=\exp(-i\frac{k\pi x}{L}),k\in
\mathbb{Z}.$Note that we have taken the constant $c=1$ as any non zero
multiple of an eigenfunction is an eigenfunction itself. The eigenfunctions
$y_{k}$ $,k\in\mathbb{Z},$ are orthogonal with respect to the inner product%
\begin{equation}
<f,g>=\int_{-L}^{L}f(x)\overline{g(x)}dx \label{eq4}%
\end{equation}
Indeed,
\begin{align}
&  <\exp(-i\frac{k\pi x}{L}),\exp(-i\frac{l\pi x}{L})>=\int_{-L}^{L}%
\exp(-i\frac{k\pi x}{L})\exp(i\frac{l\pi x}{L})dx\nonumber\\
&  =\int_{-L}^{L}\exp(-i\frac{\pi x}{L}\left\{  k-l\right\}  )dx\nonumber\\
&  =\left\{
\begin{array}
[c]{c}%
2L\mbox{  if }k=l\\
\frac{2L}{\pi\left\{  k-l\right\}  }\sin\left(  \pi\left\{  k-l\right\}
\right)  \mbox{ if }k\neq l
\end{array}
\right. \nonumber\\
&  =\left\{
\begin{array}
[c]{c}%
2L\mbox{ if }k=l\\
0\mbox{ if }k\neq l
\end{array}
\right.  \label{eq5}%
\end{align}
The set of eigenfunctions is complete since for if $f$ $\ $\ is any function
satisfying $\int_{-L}^{L}\left\vert f(x)\right\vert ^{2}dx<\infty$ then
$\int_{-L}^{L}f(x)\exp(i\frac{k\pi x}{L})dx=0$ for all $k\in Z$ will imply
$f(x)=0$ for almost all $x\in\lbrack-L,L]$.

Now, let $f$ be such that $\int_{-L}^{L}\left\vert f(x)\right\vert
^{2}dx<\infty$ , then it has the eigenfunctions expansion,%
\[
f(x)\sim\sum\limits_{k=-\infty}^{\infty}c_{k}\exp(-i\frac{k\pi x}{L})
\]
Multiplying by $\exp(i\frac{l\pi x}{L})$ and integrating both sides with
respect to $x$ from $-L$ to $L$, we get%
\[
\int_{-L}^{L}f(x)\exp(i\frac{l\pi x}{L})dx=2Lc_{l}
\]
that is,
\begin{equation}
c_{k}=\frac{1}{2L}\int_{-L}^{L}f(x)\exp(i\frac{k\pi x}{L})dx\mbox{ , }k\in
\mathbb{Z} \label{eq7}%
\end{equation}
which ends the proof.
\end{proof}

\bigskip

A connection with second order Sturm-Liouville problems goes like this.
Applying $id/dx$ to both sides of (\ref{eq1}), we get $-\frac{d^{2}y}{dx^{2}%
}=\lambda i\frac{dy}{dx}=\lambda^{2}y$. Since $y(-L)=y(L)$ we have%
\[
i\frac{dy}{dx}(-L)=\lambda y(-L)=\lambda y(L)=i\frac{dy}{dx}(L)\mbox{ , }
\]
that is%
\[
\frac{dy}{dx}(-L)=\frac{dy}{dx}(L)\mbox{.}
\]
Thus,%
\begin{equation}
\left\{
\begin{array}
[c]{c}%
-\frac{d^{2}y}{dx^{2}}=\lambda^{2}y\mbox{ , }x\in(-L,L)\\
y(-L)=y(L)\\
\frac{dy}{dx}(-L)=\frac{dy}{dx}(L)
\end{array}
\right.  \label{eq8}%
\end{equation}
a second order Sturm-Liouville problem which is usually taken as a point of
departure for the introduction of Fourier series. Its eigenvalues $\lambda
_{k}^{2}$ are just the square of the eigenvalues $\lambda_{k}$ of (\ref{eq1}),
$\lambda_{k}^{2}=\left(  \frac{k\pi}{L}\right)  ^{2}\mbox{ , }k\geq0$ and the
corresponding eigenfunctions are%
\[
1\mbox{ , }\left\{  \cos\frac{k\pi x}{L}\mbox{ , }\sin\frac{k\pi x}%
{L}\right\}  _{k\geq1}
\]
Noting that, $c_{-k}=\overline{c_{k}}\mbox{ , }k\geq1$\bigskip, one obtains
the real Fourier series as,
\begin{equation}
f(x)\sim\frac{a_{0}}{2}+\sum\limits_{k=1}^{\infty}\left\{  a_{k}\cos\frac{k\pi
x}{L}+b_{k}\sin\frac{k\pi x}{L}\right\}  \label{eq12}%
\end{equation}
where,
\begin{equation}
a_{k}=\frac{1}{L}\int_{-L}^{L}f(x)\cos(\frac{k\pi x}{L})dx\text{ , }%
b_{k}=\frac{1}{L}\int_{-L}^{L}f(x)\sin(\frac{k\pi x}{L})dx\text{.}
\label{eq11}%
\end{equation}

\section{The Fourier transform}

\setcounter{equation}{0}

\bigskip Consider the eigenvalue problem,
\begin{equation}
i\frac{dy}{dx}=\lambda y\mbox{ , }x\in\mathbb{R} \label{eq14}%
\end{equation}

From the differential equation we get $dy/dx=-i\lambda y$ whose solution is
$y=c\exp\left(  -i\lambda x\right)  $. To have a non trivial solution we need
$c\neq0$. We may as well take $c=1$ since any other non zero solution is just
a multiple of this one. Thus,
\begin{equation}
y_{\lambda}(x)=\exp\left(  -i\lambda x\right)  \label{eq15}%
\end{equation}
We notice that this function is not square integrable because no matter what
the value of $\lambda$ is,
\[
\int_{R}\left\vert y_{\lambda}(x)\right\vert ^{2}dx=\int_{R}\left\vert
\exp\left(  -i\lambda x\right)  \right\vert ^{2}dx=\int_{R}\exp\left(
2x\operatorname{Im}\lambda\right)  dx=\infty
\]
therefore, the operator $i\frac{d}{dx}$ has no eigenvalue as such. However, we
shall introduce the concepts of continuum eigenvalue and continuum
eigenfunction \cite{F69}.

\begin{definition}
\bigskip A number $\lambda$ is said to be a continuum eigenvalue for an
operator $M$ if there exists a sequence of functions $y_{n}$ in the domain of
$M$ such that the ratio $\frac{\left\vert \left\vert (M-\lambda)y_{n}%
\right\vert \right\vert }{\left\vert \left\vert y_{n}\right\vert \right\vert
}$ converges to zero as $n$ goes to $\infty$. If the functions $y_{n}$
converge pointwise to a function $y$, then $y$ is called a continuum
eigenfunction of $M$ corresponding to $\lambda$. We say then that $\lambda$
belongs to the continuous spectrum of $M$.
\end{definition}

\begin{remark}
If the convergence of $y_{n}$ is in the sense of the space to $y$, $\lambda$
would be an eigenvalue and $y$ a corresponding eigenfunction. In that case we
say that $\lambda$ belongs to the discrete spectrum of $M$.
\end{remark}

Returning to the eigenvalue problem \ref{eq14}), we claim,

\begin{theorem}
\bigskip The eigenvalue problem (\ref{eq14}) has a continuous spectrum given
by $\mathbb{R}$. Corresponding to the continuum eigenvalue $\lambda
\in\mathbb{R}$, we associate the continuum eigenfunction $y_{\lambda}%
(x)=\exp\left(  -i\lambda x\right)  $. The continuum eigenfunctions are
orthogonal with respect to the inner product $<f,g>=\int_{-\infty}^{\infty
}f(x)\overline{g(x)}dx$. The set of continuum eigenfunctions is complete and
any function $f$ satisfying $\int_{-\infty}^{\infty}\left\vert f(x)\right\vert
^{2}dx<\infty$, will have the representation
\[
f(x)\sim\int_{\mathbb{R}}F(\lambda)\exp\left(  -i\lambda x\right)  d\lambda
\]
where%
\[
F(\lambda)\sim\frac{1}{2\pi}\int_{-\infty}^{\infty}f(x)\exp\left(  i\lambda
x\right)  dx\text{.}
\]
$F$ is called the Fourier transform of $f$ and $f$ the inverse Fourier
transform of $F.$
\end{theorem}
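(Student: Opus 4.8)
The plan is to mirror the structure of the proof of the discrete case (Theorem for Fourier series), but with the continuum-eigenvalue machinery of the preceding definition replacing honest eigenfunctions. There are four things to establish: (i) every real $\lambda$ lies in the continuous spectrum, (ii) the associated functions $y_\lambda(x)=\exp(-i\lambda x)$ are ``orthogonal'' in a suitable (distributional) sense, (iii) completeness, and (iv) the resulting expansion is precisely the Fourier inversion pair. I would treat (i) carefully and then be frank that (ii)--(iv) rest on the classical Fourier inversion theorem, which I would invoke rather than reprove.

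For (i), I would exhibit, for each fixed $\lambda\in\mathbb{R}$, an explicit approximating sequence realizing the definition. The natural choice is to truncate the non-square-integrable eigenfunction: set
\begin{equation}
y_{n}(x)=\exp(-i\lambda x)\,\chi_{[-n,n]}(x),\label{eq:trunc}
\end{equation}
or a smoothed version thereof. Then $\|y_{n}\|^{2}=2n\to\infty$, while on $(-n,n)$ one has $i\,y_{n}'=\lambda y_{n}$ exactly, so $(M-\lambda)y_{n}$ is supported only at the two endpoints (in the smoothed version, on two shrinking collars). A direct estimate then gives $\|(M-\lambda)y_{n}\|=O(1)$ or $o(\sqrt{n})$ against $\|y_{n}\|=\sqrt{2n}$, so the ratio $\|(M-\lambda)y_{n}\|/\|y_{n}\|\to 0$. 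Pointwise $y_{n}(x)\to\exp(-i\lambda x)=y_{\lambda}(x)$, matching the definition's notion of continuum eigenfunction. Since $\lambda\in\mathbb{R}$ was arbitrary and no nonreal $\lambda$ can work (the computation already displayed shows $\|y_\lambda\|=\infty$ with exponential blow-up when $\operatorname{Im}\lambda\neq0$), the continuous spectrum is exactly $\mathbb{R}$.

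For (ii), the honest statement is that $\langle y_{\lambda},y_{\mu}\rangle=\int_{\mathbb{R}}\exp(-i(\lambda-\mu)x)\,dx=2\pi\,\delta(\lambda-\mu)$ in the distributional sense; I would present this as the continuum analogue of the Kronecker-delta orthogonality \eqref{eq5}, obtained by the same integral but now yielding a Dirac mass rather than $2L\delta_{kl}$. This is the exact point where the elementary bookkeeping breaks down and one must either accept a formal computation or appeal to distribution theory, so this is the main obstacle: the ``orthogonality'' and ``completeness'' are not literally provable at the level of rigor of the discrete case, and the honest route is to state them formally and then validate the whole package by the Fourier inversion theorem. Accordingly, for (iii) and (iv) I would expand $f$ over the continuum as $f(x)\sim\int_{\mathbb{R}}F(\lambda)\exp(-i\lambda x)\,d\lambda$, multiply formally by $\overline{y_{\mu}(x)}=\exp(i\mu x)$, integrate in $x$, and use the delta-orthogonality to solve for the coefficient density
\begin{equation}
F(\lambda)=\frac{1}{2\pi}\int_{\mathbb{R}}f(x)\exp(i\lambda x)\,dx,\label{eq:coeff}
\end{equation}
exactly paralleling how \eqref{eq7} was extracted in the series case. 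The rigorous backing is the Plancherel/inversion theorem on $L^{2}(\mathbb{R})$, which guarantees that for $f$ with $\int_{\mathbb{R}}|f|^{2}<\infty$ the transform $F$ is well defined and the reconstruction holds in the $L^{2}$ sense; I would cite this to close completeness and finish the proof.
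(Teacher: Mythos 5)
Your proposal is correct and follows essentially the same route as the paper: formal delta-function orthogonality $\langle y_\lambda,y_\mu\rangle=\int_{\mathbb{R}}\exp(-i(\lambda-\mu)x)\,dx=2\pi\,\delta(\lambda-\mu)$, then multiplying the expansion $f(x)\sim\int_{\mathbb{R}}F(\lambda)\exp(-i\lambda x)\,d\lambda$ by $\exp(i\mu x)$ and integrating to extract $F(\mu)=\frac{1}{2\pi}\int_{\mathbb{R}}f(x)\exp(i\mu x)\,dx$. The only place you go beyond the paper is part (i): the paper merely asserts that the continuous spectrum is $\mathbb{R}$, whereas your truncated sequence $y_n(x)=\exp(-i\lambda x)\chi_{[-n,n]}(x)$ with $\|(M-\lambda)y_n\|/\|y_n\|\to 0$ actually verifies the stated definition of a continuum eigenvalue, and your explicit appeal to the Plancherel/inversion theorem supplies the rigorous backing that the paper leaves implicit in its formal delta-function manipulations.
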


\begin{proof}
We have here a continuous spectrum given by $\mathbb{R}$. As for the
orthogonality we have,%
\begin{align*}
\int_{R}y_{\lambda}(x)\overline{y_{\mu}(x)}dx  &  =\int_{R}\exp\left(
-i\lambda x\right)  \overline{\exp\left(  -i\mu x\right)  }dx\\
&  =\int_{R}\exp\left(  -i\lambda x\right)  \exp\left(  i\mu x\right)  dx\\
&  =\int_{R}\exp\left(  -i\left(  \lambda-\mu\right)  x\right)  dx\\
&  =2\pi\delta\left(  \lambda-\mu\right)
\end{align*}

\end{proof}

where we have made use of the property of the Dirac delta (generalized)
function $\delta$,
\begin{equation}
\delta(a)=\frac{1}{2\pi}\int_{R}\exp\left(  -iax\right)  dx \label{eq16}%
\end{equation}

Thus, for $f$ such that $\int_{R}\left\vert f(x)\right\vert ^{2}dx<\infty$ we
have the representation,%
\begin{equation}
f(x)=\int_{R}F(\lambda)\exp\left(  -i\lambda x\right)  d\lambda\label{eq18}%
\end{equation}

from which we get, after multiplication by $\exp\left(  i\mu x\right)  $ and
integration with respect to $x$ from $-\infty$ to $+\infty$,
\begin{align*}
\int_{R}f(x)\exp\left(  i\mu x\right)  dx  &  =\int_{R}\left\{  \int
_{R}F(\lambda)\exp\left(  -i\lambda x\right)  d\lambda\right\}  \exp\left(
i\mu x\right)  dx\\
&  =\int_{R}F(\lambda)\left\{  \int_{R}\exp\left(  -i\left(  \lambda
-\mu\right)  x\right)  dx\right\}  d\lambda\\
&  =\int_{R}F(\lambda)\left\{  2\pi\delta\left(  \lambda-\mu\right)  \right\}
d\lambda\\
&  =2\pi F(\mu)
\end{align*}

Here we used the property
\begin{equation}
\int_{R}g(z)\delta(z)dz=g(0) \label{eq19}%
\end{equation}

Thus,%
\[
F(\mu)=\frac{1}{2\pi}\int_{-\infty}^{\infty}f(x)\exp\left(  i\mu x\right)  dx
\]

which concludes the proof.

\section{\bigskip The Laplace transform}

\setcounter{equation}{0}

Consider the problem of finding the values of the parameter $\lambda$ for
which the following problem will have non trivial solutions,%

\begin{equation}
i\left(  \frac{dy}{dx}-\sigma y\right)  =\lambda y\mbox{ , }x\in
\lbrack0,\infty) \label{eq21}%
\end{equation}

We have $\frac{dy}{dx}=(\sigma-i\lambda)y$ thus, a non trivial solution is%

\begin{equation}
y_{\lambda}=\exp((\sigma-i\lambda)x) \label{eq22}%
\end{equation}

for any $\lambda\in R$ and any other non trivial solution is a multiple of
this one.

\bigskip Here again, we notice that this function is not square integrable
because no matter what the value of $\lambda$ is,
\[
\int_{R}\left\vert y_{\lambda}(x)\right\vert ^{2}dx=\int_{R}\left\vert
\exp((\sigma-i\lambda)x)\right\vert ^{2}dx=\int_{R}\exp\left(  2x\left\{
\sigma+\operatorname{Im}\lambda\right\}  \right)  dx=\infty
\]
therefore, the operator$i\left(  \frac{dy}{dx}-\sigma y\right)  $ has no
eigenvalue as such. We claim,

\begin{theorem}
The eigenvalue problem (\ref{eq21}) has a continuous spectrum given by
$\mathbb{R}$. Corresponding to the continuum eigenvalue $\lambda\in\mathbb{R}%
$, we associate the continuum eigenfunction $y_{\lambda}=\exp((\sigma
-i\lambda)x)$. The continuum eigenfunctions are orthogonal with respect to the
inner product $<f,g>=\int_{-\infty}^{\infty}\mathbf{e}^{-2\sigma
x}f(x)\overline{g(x)}dx$. The set of continuum eigenfunctions is complete and
any function $f$ satisfying $\int_{-\infty}^{\infty}\mathbf{e}^{-2\sigma
x}\left\vert f(x)\right\vert ^{2}dx<\infty$, will have the representation
\[
f(x)\sim\frac{1}{2\pi}\int_{-\infty}^{\infty}F(\lambda)e^{(\sigma-i\lambda
)x}d\lambda
\]
where%
\[
F(\lambda)\sim\int_{0}^{\infty}f(x)e^{(-\sigma+i\lambda)x}dx\text{.}
\]

\end{theorem}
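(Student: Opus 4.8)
The plan is to follow the same path as the proof of the Fourier transform theorem, the only new ingredient being the weight $e^{-2\sigma x}$ in the inner product, which is tailored precisely to neutralise the exponential growth of $y_\lambda=\exp((\sigma-i\lambda)x)$. First I would dispose of the spectrum: since the equation in (\ref{eq21}) produces the solution $y_\lambda$ for every real $\lambda$, and since (as already observed just before the statement) none of these solutions is square integrable, every $\lambda\in\mathbb{R}$ qualifies as a continuum eigenvalue in the sense of the Definition, so the continuous spectrum is $\mathbb{R}$.

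Next I would verify orthogonality by a direct computation. Writing $\langle y_\lambda,y_\mu\rangle=\int_{\mathbb{R}}e^{-2\sigma x}\,e^{(\sigma-i\lambda)x}\,e^{(\sigma+i\mu)x}\,dx$, the factors $e^{-2\sigma x}$ and $e^{2\sigma x}$ cancel in the exponent, leaving $\int_{\mathbb{R}}e^{-i(\lambda-\mu)x}\,dx=2\pi\delta(\lambda-\mu)$ by the delta identity (\ref{eq16}) of the previous section. This is the crux: the weight has reduced the problem to exactly the Fourier situation already treated. The cleanest way to then obtain completeness, and the point I would emphasise, is the substitution $g(x)=e^{-\sigma x}f(x)$. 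Under this map the weighted inner product becomes the ordinary $L^2$ inner product, and the eigenfunction is carried to $e^{-\sigma x}y_\lambda=e^{-i\lambda x}$, which is precisely the continuum eigenfunction $y_\lambda$ of (\ref{eq15}). Completeness of $\{y_\lambda\}$ in the weighted space is therefore inherited verbatim from the completeness already established for $\{e^{-i\lambda x}\}$.

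Finally, for the representation I would posit $f(x)=\frac{1}{2\pi}\int_{\mathbb{R}}F(\lambda)e^{(\sigma-i\lambda)x}\,d\lambda$ with $F$ to be determined, then test against $y_\mu$ in the weighted inner product — equivalently, multiply both sides by $e^{(-\sigma+i\mu)x}$ and integrate in $x$. On the right, interchanging the two integrals and invoking (\ref{eq16}) and (\ref{eq19}) collapses the $\lambda$-integral through $2\pi\delta(\lambda-\mu)$ to give $F(\mu)$; on the left one is left with $\int f(x)e^{(-\sigma+i\mu)x}\,dx$, which is the claimed formula for $F$. I note in passing that the factor $\frac{1}{2\pi}$ sitting on the representation side here (rather than on $F$, as in the Fourier case) is exactly what leaves $F$ free of any constant, matching the classical Laplace/Bromwich normalisation.

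The hard part — and the place where I expect a careful reader to pause — is reconciling the domain of integration: the formula for $F$ is an integral over $[0,\infty)$, whereas the manipulations above run over all of $\mathbb{R}$. This is resolved by the standing convention that $f$, arising from the half-line problem (\ref{eq21}), is extended by zero to $x<0$, so that $\int_{\mathbb{R}}f(x)e^{(-\sigma+i\mu)x}\,dx=\int_{0}^{\infty}f(x)e^{(-\sigma+i\mu)x}\,dx=F(\mu)$. With $s=\sigma-i\lambda$ this is exactly the Laplace transform of $f$ read along the vertical line $\operatorname{Re}s=\sigma$, and the representation is its inversion along that line. As throughout the paper, the Dirac delta is handled formally at the elementary level of the Fourier section, which is all that is needed here.
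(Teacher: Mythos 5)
Your proposal is correct and follows essentially the same route as the paper: identify the continuous spectrum, cancel the weight $e^{-2\sigma x}$ against $|y_\lambda|^2$ to reduce orthogonality to the delta identity $2\pi\delta(\lambda-\mu)$, then test the posited representation against $y_\mu$ in the weighted inner product to extract $F$. Your two additions --- deriving completeness by the substitution $g(x)=e^{-\sigma x}f(x)$ (where the paper simply asserts it) and reconciling the half-line integral for $F$ with the whole-line delta computation via zero-extension --- are refinements of, not departures from, the paper's argument.
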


\begin{proof}
We have here again a continuous spectrum given by $\mathbb{R}$. Any two
continuum eigenfunctions $y_{\lambda}$ and $y_{\mu}$ are orthogonal with
respect to the weight $w(x)=\exp(-2\sigma x)$ over $[0,\infty)$. Indeed,%
\begin{align}
\int_{0}^{\infty}e^{-2\sigma x}y_{\lambda}(x)\overline{y_{\mu}(x)}dx  &
=\int_{0}^{\infty}e^{-2\sigma x}e^{(\sigma-i\lambda)x}e^{(\sigma+i\mu
)x}dx\nonumber\\
&  =\int_{0}^{\infty}e^{-i(\lambda-\mu)x}dx\nonumber\\
&  =2\pi\delta(\lambda-\mu). \label{eq24}%
\end{align}
Let $f$ be such that%
\begin{equation}
\int_{0}^{\infty}\left\vert f(x)\right\vert ^{2}e^{-2\sigma x}dx<\infty.
\label{eq25}%
\end{equation}
We have,
\begin{equation}
f(x)=\frac{1}{2\pi}\int_{-\infty}^{\infty}F(\lambda)e^{(\sigma-i\lambda
)x}d\lambda. \label{eq26}%
\end{equation}
We have taken the factor $\frac{1}{2\pi}$ for convenience and compatibility
with known results. Multiplying by $\exp(-2\sigma x)\exp((\sigma+i\mu)x)$ and
integrating with respect to $x$ from $0$ to $\infty$, we get%
\begin{align}
\int_{0}^{\infty}f(x)\exp((-\sigma+i\mu)x)dx  &  =\int_{0}^{\infty}%
\exp((-\sigma+i\mu)x)\frac{1}{2\pi}\int_{-\infty}^{\infty}F(\lambda
)\exp((\sigma-i\lambda)x)d\lambda dx\nonumber\\
&  =\frac{1}{2\pi}\int_{-\infty}^{\infty}F(\lambda)2\pi\delta(\lambda
-\mu)d\lambda\nonumber\\
&  =F(\mu) \label{eq27}%
\end{align}
that is,%
\begin{equation}
F(\lambda)=\int_{0}^{\infty}f(x)e^{(-\sigma+i\lambda)x}dx\mbox{.} \label{eq28}%
\end{equation}

\end{proof}

\bigskip If we let $s=\sigma-i\lambda$ and denote $\widehat{f}(s)=F(\lambda)$,
we get,
\begin{equation}
\widehat{f}(s)=\int_{0}^{\infty}f(x)e^{-sx}dx\text{.} \label{eq29}%
\end{equation}

Now, $ds=-id\lambda$ so that
\begin{equation}
f(x)=\frac{1}{2\pi}\int_{\sigma+i\infty}^{\sigma-i\infty}\widehat{f}%
(s)e^{sx}ids \label{eq30}%
\end{equation}

leading to,%
\begin{equation}
f(x)=\frac{1}{2\pi i}\int_{\sigma-i\infty}^{\sigma+i\infty}\widehat
{f}(s)e^{sx}ds \label{eq31}%
\end{equation}

Form the above development we can see that for $\widehat{f}$ to exist, $f$ has
to satisfy%
\begin{equation}
\int_{0}^{\infty}\left\vert f(x)\right\vert ^{2}e^{-2\sigma x}dx<\infty
\label{eq32}%
\end{equation}

Thus there exists $M>0$ and $c>0$ such that $\left\vert f(x)\right\vert
^{2}e^{-2\sigma x}<M^{2}$ for all $x>c$. That is,%
\begin{equation}
\left\vert f(x)\right\vert <Me^{\sigma x}\mbox{ , for all }x>c \label{eq33}%
\end{equation}

We say that $f$ is of exponential type and $\sigma$ is called the abscissa of
convergence. Hence, we introduce,

\begin{definition}
Let $f$ be of exponential type, then the function $\widehat{f}$ $\ $defined
by,
\[
\widehat{f}(s)=\int_{0}^{\infty}f(x)e^{-sx}dx
\]
is called the Laplace transform of $f$. Furthermore,
\[
f(x)=\frac{1}{2\pi i}\int_{\sigma-i\infty}^{\sigma+i\infty}\widehat
{f}(s)e^{sx}ds
\]
gives the inverse Laplace transform of $\widehat{f}$.
\end{definition}

\section{\bigskip The Fourier-Laplace transform}

\setcounter{equation}{0}

Consider the problem of finding the values of the parameters $\mu$ and
$\lambda$ for which the following problem will have non trivial solutions,%

\begin{equation}
\left\{
\begin{array}
[c]{c}%
i\frac{\partial y}{\partial x}=\lambda y\\
i\left(  \frac{\partial y}{\partial t}-\sigma y\right)  =\mu y
\end{array}
\right.  \label{eq34}%
\end{equation}

$(x,t)\in(-\infty,\infty)\times(0,\infty)$. We claim,

\begin{theorem}
\bigskip The eigenvalue problem (\ref{eq34}) has a continuous spectrum given
by $\mathbb{R}^{2}$. Corresponding to the continuum eigenvalue $(\lambda
,\mu)\in\mathbb{R}^{2}$, we associate the continuum eigenfunction
$y_{\lambda,\mu}(x,t)=\exp(-i\lambda x+(\sigma-i\mu)t)$. These continuum
eigenfunctions are orthogonal with respect to the inner product $<f,g>=\int
_{-\infty}^{\infty}\int_{0}^{\infty}\mathbf{e}^{-2\sigma t}f(x)\overline
{g(t)}dxdt$. The set of continuum eigenfunctions is complete and any function
$f$ satisfying $\int_{-\infty}^{\infty}\int_{0}^{\infty}\mathbf{e}^{-2\sigma
t}\left\vert f(x,t)\right\vert ^{2}dxdt<\infty$, will have the representation
\[
f(x,t)\sim\frac{1}{2\pi}\frac{1}{2\pi i}\int_{\sigma-i\infty}^{\sigma+i\infty
}\int_{-\infty}^{\infty}F(\lambda,\mu)e^{-i\lambda x+\mu t}d\lambda d\mu
\]
where%
\[
F(\lambda,\mu)\sim\frac{1}{2\pi}\int_{-\infty}^{\infty}\int_{0}^{\infty
}f(x,t)e^{i\lambda x-\mu t}dxdt\text{.}
\]
$F$ is called the Fourier-Laplace transform of $f$ and $f$ is called the
inverse Fourier-Laplace transform of $F$.
\end{theorem}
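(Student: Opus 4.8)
The plan is to exploit the fact that the system (\ref{eq34}) decouples into the two one-dimensional problems already solved. The first equation $i\,\partial y/\partial x=\lambda y$ is exactly the Fourier eigenvalue problem (\ref{eq14}) of Section 3 in the variable $x$, and the second equation $i(\partial y/\partial t-\sigma y)=\mu y$ is exactly the Laplace eigenvalue problem (\ref{eq21}) of Section 4 in the variable $t$. First I would solve by separation of variables: integrating the first equation yields $e^{-i\lambda x}$ up to a factor depending on $t$, integrating the second yields $e^{(\sigma-i\mu)t}$ up to a factor depending on $x$, and compatibility forces the product $y_{\lambda,\mu}(x,t)=\exp(-i\lambda x+(\sigma-i\mu)t)$. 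As in the two scalar cases, $\int_{\mathbb{R}}\int_{0}^{\infty}e^{-2\sigma t}\lvert y_{\lambda,\mu}\rvert^{2}\,dx\,dt=\infty$ for every $(\lambda,\mu)$, so there is no genuine eigenfunction and the spectrum is purely continuous, equal to $\mathbb{R}^{2}$.

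Next I would establish orthogonality. Because both the weight $e^{-2\sigma t}$ and the eigenfunctions factor into an $x$-part and a $t$-part, the weighted inner product $\langle y_{\lambda,\mu},y_{\lambda',\mu'}\rangle$ splits as $\big(\int_{\mathbb{R}}e^{-i(\lambda-\lambda')x}\,dx\big)\big(\int_{0}^{\infty}e^{-i(\mu-\mu')t}\,dt\big)$. The first factor is $2\pi\delta(\lambda-\lambda')$ by the delta identity (\ref{eq16}) of Section 3, and the second is $2\pi\delta(\mu-\mu')$ exactly as in (\ref{eq24}) of Section 4. Hence the continuum eigenfunctions are orthogonal with $\langle y_{\lambda,\mu},y_{\lambda',\mu'}\rangle=(2\pi)^{2}\delta(\lambda-\lambda')\delta(\mu-\mu')$.

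For completeness and the expansion I would posit the representation $f(x,t)=\kappa\int_{\mathbb{R}^{2}}\widetilde{F}(\lambda,\mu_{0})\,e^{-i\lambda x}e^{(\sigma-i\mu_{0})t}\,d\lambda\,d\mu_{0}$ with a normalization constant $\kappa$ fixed for compatibility as in Section 4, multiply both sides by the weight $e^{-2\sigma t}$ times the conjugate eigenfunction $\overline{y_{\lambda',\mu_{0}'}}$, and integrate over $x\in\mathbb{R}$, $t\in[0,\infty)$. The double-delta sifting from the previous step then isolates a single value of $\widetilde{F}$, exhibiting the forward transform as the composition of a Fourier transform in $x$ with a one-sided Laplace-type transform in $t$, namely $F(\lambda,\mu)\sim\frac{1}{2\pi}\int_{\mathbb{R}}\int_{0}^{\infty}f(x,t)e^{i\lambda x-\mu t}\,dx\,dt$. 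Finally, just as Section 4 passes from (\ref{eq26}) to (\ref{eq31}), I would convert the $\mu_{0}$-integral in the representation into a Bromwich contour integral by the substitution $s=\sigma-i\mu_{0}$ (then relabelling $s$ as $\mu$): this turns $\int_{-\infty}^{\infty}d\mu_{0}$ into $\int_{\sigma-i\infty}^{\sigma+i\infty}d\mu$, introduces the Jacobian together with a reversal of orientation, and produces $e^{\mu t}$ in the kernel, yielding the stated inverse formula.

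The conceptual content is light once Sections 3 and 4 are in hand: the system is a tensor product of the two scalar problems, and every step factors accordingly. The points requiring care are bookkeeping rather than ideas, namely keeping the orientation and the $1/i$ from the change of variables in the Laplace direction straight, and assigning the constants $\frac{1}{2\pi}$ and $\frac{1}{2\pi i}$ so that the displayed forward and inverse formulas hold; as in Section 4, these constants are in part a matter of convention, fixed "for convenience and compatibility." As there, the delta identities and the half-line relation $\int_{0}^{\infty}e^{-i(\mu-\mu')t}\,dt=2\pi\delta(\mu-\mu')$ are to be read formally, in the distributional sense appropriate to continuum eigenfunctions.
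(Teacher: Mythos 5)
Your proposal follows essentially the same route as the paper: separation of variables to obtain $y_{\lambda,\mu}(x,t)=\exp(-i\lambda x+(\sigma-i\mu)t)$, orthogonality via the factored double integral yielding $(2\pi)^{2}\delta(\lambda-\lambda')\delta(\mu-\mu')$, delta-sifting against the posited expansion to extract the forward transform, and the substitution $s=\sigma-i\mu$ to pass to the Bromwich contour, all at the same formal (distributional) level of rigor as the paper. The only difference is presentational — you make the tensor-product structure explicit — so the argument matches.
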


\begin{proof}
The first differential equation in (\ref{eq34}) gives $y(x,t)=\exp(-i\lambda
x)u(t).$ Replacing into the second differential equation gives, $u^{\prime
}(t)=(\sigma-i\mu)u(t)$, that is $u(t)=c\exp((\sigma-i\mu)t).$Thus,%
\[
y_{\lambda,\mu}(x,t)=\exp(-i\lambda x+(\sigma-i\mu)t)
\]
where we have taken without loss of generality, $c=1$, is a continuum
eigenfunction corresponding to the continuum eigenvalue $(\lambda,\mu)$ in the
continuous spectrum $\mathbb{R}^{2}$. Any two continuum eigenfunctions
$y_{\lambda,\mu}(x,t)$ and $y_{\lambda^{\prime},\mu^{\prime}}(x,t)$ are
orthogonal with respect to the inner product $<f,g>=\int_{-\infty}^{\infty
}\int_{0}^{\infty}\mathbf{e}^{-2\sigma t}f(x)\overline{g(t)}dxdt$. Indeed,%
\begin{align*}
&  <y_{\lambda,\mu},y_{\lambda^{\prime},\mu^{\prime}}>=\int_{-\infty}^{\infty
}\int_{0}^{\infty}\mathbf{e}^{-2\sigma t}y_{\lambda,\mu}(x,t)\overline
{y_{\lambda^{\prime},\mu^{\prime}}(x,t)}dxdt\\
&  =\int_{-\infty}^{\infty}\int_{0}^{\infty}\mathbf{e}^{-2\sigma t}%
\exp(-i\lambda x+(\sigma-i\mu)t)\exp(i\lambda^{\prime}x+(\sigma+i\mu^{\prime
})t)dxdt\\
&  =\int_{-\infty}^{\infty}\int_{0}^{\infty}\exp(-i(\lambda-\lambda^{\prime
})x-i(\mu-\mu^{\prime})t)dxdt\\
&  =(2\pi)^{2}\delta(\lambda-\lambda^{\prime})\delta(\mu-\mu^{\prime})\text{.}%
\end{align*}
If $f$ is such that $\int_{-\infty}^{\infty}\int_{0}^{\infty}\mathbf{e}%
^{-2\sigma t}\left\vert f(x,t)\right\vert ^{2}dxdt<\infty$ then,%
\[
f(x,t)\sim\int_{-\infty}^{\infty}\int_{-\infty}^{\infty}\widetilde{F}%
(\lambda,\mu)e^{-i\lambda x+(\sigma-i\mu)t}d\lambda d\mu
\]
Multiplying both sides by $\mathbf{e}^{-2\sigma t}\exp(i\lambda^{\prime
}x+(\sigma+i\mu^{\prime})t)$ we get after integration with respect to $(x,t)$
over $\mathbb{R}\times\mathbb{R}_{+}$,
\[
\widetilde{F}(\lambda,\mu)=\frac{1}{(2\pi)^{2}}\int_{-\infty}^{\infty}\int
_{0}^{\infty}f(x,t)e^{i\lambda x+(\sigma+i\mu)t}dxdt
\]
Let $s=-(\sigma+i\mu)$ and $F(\lambda,s)=\widetilde{F}(\lambda,\mu)$, we
have,
\[
F(\lambda,s)=\frac{1}{(2\pi)^{2}}\int_{-\infty}^{\infty}\int_{0}^{\infty
}f(x,t)e^{i\lambda x-st}dxdt
\]
and%
\[
f(x,t)\sim\frac{1}{2\pi}\frac{1}{2\pi i}\int_{\sigma-i\infty}^{\sigma+i\infty
}\int_{-\infty}^{\infty}F(\lambda,\mu)e^{-i\lambda x+\mu t}d\lambda d\mu
\]
which ends the proof.
\end{proof}

\section{Conclusion}

In this paper we have provided a common framework to deal with four of the
most used techniques in applied mathematics, viz., Fourier series, Fourier
transform, Laplace transform and Fourier-Laplace transform. It has been shown
that they arise from first order eigenvalue problems with discrete/continuous
spectra. We believe that the approach is worth presenting in an introductory
course on applied/engineering mathematics.


\begin{thebibliography}{9}                                                                                                %


\bibitem {F69}B. Friedman, Lectures on Application oriented Mathematics,
Holden-Day, Inc, 1969

\bibitem {TB94}J. L. Troutman and M. Bautista, Boundary value Problems of
Applied Mathematics, PWS Publishing Co., 1994

\bibitem {O1995}P. V. O'Neil, Advanced Ebgineering Mathematics, Brooks/Cole
Publishing Company, ITP, 1995

\bibitem {S96}G. Strang, Introduction to Applied Mathematics,
Wellesley-Cambridge Press, Cambridge, MA, 1996

\bibitem {ZC2000}D. G. Zill and M. R. Cullen, Advanced Engineering
Mathematics, Jones and Bartlett Publishers, 2000

\bibitem {D2003}D. G. Duffy, Advanced Engineering Mathematics with Matlab,
Chapman \& Hall/CRC, 2003
\end{thebibliography}
\end{document}